\def\BBox{\kern  -0.2cm\hbox{\vrule width 0.2cm height 0.2cm}}
\newtheorem{theorem}{Theorem}[section]
\newtheorem{lemma}[theorem]{Lemma}
\newtheorem{conjecture}[theorem]{Conjecture}
\title{Mixed Cages}
\author{G. Araujo-Pardo, C. Hern\'andez-Cruz,
J.J. Montellano-Ballesteros\\
\thanks{ \footnotesize{\em Email addresses:} ~ garaujo@matem.unam.mx (G. Araujo-Pardo),
~ tetragrammatos@gmail.com  (C. Hern\'andez-Cruz),\, \, ~ 
juancho@matem.unam.mx (J.J. Montellano-Ballesteros)}
{\footnotesize Instituto de Matem\'{a}ticas, Universidad Nacional Aut\'{o}noma de M\'{e}xico,} \\
{\footnotesize M\'{e}xico D. F., M\'exico }
}
\begin{document}
\maketitle

\begin{abstract}

We introduce the notion of a $[z, r; g]$-mixed cage.   A $[z, r; g]$-mixed cage is a mixed graph $G$, $z$-regular by arcs, $r$-regular by edges, with girth $g$ and minimum order.  In this paper we prove the existence of $[z, r ;g]$-mixed cages and exhibit families of mixed cages for some specific values.   We also give lower and upper bounds for some choices of $z, r$ and $g$.  In particular we present the first results on $[z,r;g]$- mixed cages for $z=1$ and any $r\geq 1$ and $g\geq 3$, and for any $z\geq 1$, $r=1$ and $g=4$.
\vspace{1 mm}

\noindent{\bf Key words:}  Mixed graph, Mixed cage, Mixed Moore graphs.

\end{abstract}

\section{Introduction}
\textcolor{green}{}

In this paper we consider graphs which are finite and mixed, i.e., they may contain (directed) arcs as well as (undirected) edges. We allow multiple edges and arcs. 

 A {\it{mixed regular graph}} is a simple and finite graph $G$, such that each vertex $v$ of $G$ is the start-point of $z$ arcs, the end-point of $z$ arcs and is incident with $r$ edges.  $z$ is the directed degree and $r$ is the undirected degree of a vertex $v$, and we set $d=r+z$; $d$ is the degree of $v$ ($r$, $z$, and $d$ are independent of the choice of the vertex). We will consider walks of the form $(v_0, \dots, v_n)$, where for each $i \in \{0, \dots, n-1\}$, $v_i v_{i+1}$ is an edge of $G$ or $(v_i, v_{i+1})$ is an arc of $G$.   In other words, we will consider walks that could contain edges and arcs, provided that all the arcs are traversed in the same direction.   With this kind of walks (and
hence cycles), we define the girth of $G$ as the length of the shortest cycle of $G$. If $G$ has girth equal to $g$, we say that $G$ is a $[z,r;g]$-{\em{mixed graph}} of directed degree $z$, undirected degree $r$ and girth $g$.  


   A $[z, r; g]$-{\em mixed cage} is a $[z, r; g]$-mixed graph of minimum order.  We are interested in the problem of finding $[z, r; g]$-mixed cages for different values of $z, r$ and $g$.
   
   If $z = 0$, then we are talking about the well known {\it{Cage Problem}} that has been widely studied since cages were introduced by Tutte \cite{T47} in 1947 and after Erd\"os and Sachs proved, in 1963, their existence in  \cite{ES63}. A complete survey about this topic and its relevance can be found in \cite{EJ08}. 
   
There exists a natural lower bound for the order of a cage, which will be used througout this paper. It is called {\it {Moore's lower bound}}, denoted by $n_0(r,g)$, and is obtained by counting the vertices of a tree, ${\cal{T}}_{(g-1)/2}$, rooted on a vertex and with radius $(g-1)/2$, if $g$ is odd; or the vertices of a ``double-tree'' rooted at an edge (that is, two different ${\cal{T}}_{(g-3)/2}$ trees rooted each one at the vertices incident with an edge) if $g$ is even (see \cite{EJ08}):

  \begin{equation}\label{lowercages} n_0(r,g) = \left\{ \begin{array}{ll} 1 + r + r(r-1) + \cdots
+ r(r-1)^{(g-3)/2} &\mbox{ if $g$ is odd};\\
2(1 +(r-1) +
\cdots + (r-1)^{g/2-1})&\mbox{ if $g$ is
even}.\end{array}\right.\end{equation}

A known problem related to the {\it {Cage Problem}} is the {\it{$(\Delta,d)$-Problem}}, which consists of finding graphs of maximum order, with maximum degree $\Delta$ (they could be regular of degree $\Delta$), and fixed diameter $d$ (maximum distance between each pair of vertices). The upper bound of this parameter is called the {\it{Moore bound}} and coincide with the lower bound for the cages of odd girth $g=2d+1$ (where $d$ is the diameter of the graph). The graphs that attain this upper bound are called {\it{Moore Graphs}}, and this is the reason for calling the cages that attain the lower bound $n_0(k,g)$ {\it{Moore Cages}}. We recall this because, one of our motivations for the study of ``Mixed Cages'' is related with the study of ``Mixed Moore Graphs'' (see \cite{ABMM16}, \cite{J13}, \cite{LMF16}, \cite{MS13}, \cite{MM08} and \cite{MMG07} for more information of this topic). 
      
On the other hand, if $r = 0$ the problem consists of finding directed cages or constructing regular digraphs of given girth (length of the shortest directed cycle) and minimum order. There also exists a lot of work related with this topic, as an example see \cite{ABO, BCW}; nonetheless, it quickly becomes clear that determining a good lower bound of the directed cages is not an easy problem, a construction similar to the one used for the undirected case is almost impossible to obtain due to the many possibilities that the direction of the arcs introduce. 

An upper bound for the number of vertices of directed cages is given in \cite{BCW} where the authors constructed circulant $z$-regular digraphs of girth $g$ and order $z(g-1)+1$. They also proved that, for $z \geq 1$, the lower bound of the order of a $(z,4)$-directed cage is $(5z + 4)/2$. As a consequence of this, for $1 \le z \le 3$, a $(z,4)$-directed cage has $3z + 1$ vertices. Moreover, they also prove that the order of a $(4,4)$-directed cage is $13$ and that the $(2,g)$-directed cages for $3 \le g \le 5$ have $2g-1$ vertices. Based on this results, the authors conjecture that, for any pair of integers $z \ge 1$ and $g \ge 2$, the order of a $(z,g)$-directed cage is exactly $z(g - 1) + 1$. Caccetta and Haggkvist \cite{CH78} proposed a generalization of this conjecture,
claiming that if each vertex of a digraph $D$ has out-degree at least $k$, then the girth
of $G$ is at most $\left\lfloor \frac{|V(D)|}{k} \right\rfloor $. Behzad proved both conjectures to be true for $k=2$ (see \cite{B73}); for $k=3$, the Caccetta-Haggkvist Conjecture was proved first by Bermond~\cite{B75} and later by Hamidoune \cite{H87}. Finally, Hamidoune \cite{H80,H82} proved both conjectures for $k=4$ and for all vertex-transitive digraphs. 

Inspired in the relationship between Moore Graphs and cages we propose the study of Mixed Cages motivated by the study of Mixed Moore Graphs, which have an analogous definition to the one of mixed cages but the related problem here is to find mixed graphs with maximum order and a fixed diameter. There are a lot of interesting results about  Mixed Moore graphs since they were introduced by Boz\'ak in 1979 (see \cite{B79}). 

Boz\'ak conjectured that the unique Mixed Moore graphs of diameter $k\geq 3$ are either $C_{k+1}$ or $\vec{C}_{k+1}$ (the undirected and the directed cycle respectively); this was proved by Nguyen, Miller and Gimbert in 2007 \cite{MMG07}. Moreover, they also proved that all mixed Moore graphs of diameter 2 known at that time were unique. However, this is not generally true since Jorgensen recently found  (see \cite{J13}) two non-isomorphic mixed Moore graphs of diameter 2,  out-degree 7, undirected degree 3 and order 108. As we previously mentioned, these kind of results, and other similar results related with Mixed Moore graphs (see the survey \cite{MS13} on Moore and Mixed Moore Graphs) inspired us to study and understand the behavior of Mixed Cages. 

In this paper we study the mixed cages which are closest in structure to the cages or the directed cages, in other words, we start our work taking advantage of the already existing results and ideas for graphs and directed graphs.  For this reason we study the $[1,r; g]$-mixed cages, where the directed degree is only one and the undirected degree of the graph is any integer $r\geq 1$; however as the reader will see, this "simple" change give us a lot of open questions very different
 to the undirected case. 

On the other hand, it is also natural asking what happens with the $[z,1; g]$-cages for any $z\geq 1$ and any girth $g\geq 3$ (this is, any directed degree and undirected degree equal to one).  It comes as no surprise that this problem is more difficult that the directed case, and by the analysis exposed in the introduction, it is natural that our contribution is limited, until now, to the $[z,1; 4]$-cages.

The rest of the article is organized as follows. In Section \ref{1rg} we give our results about the $[1,r; g]$-mixed cages, including a general lower bound at the beginning of the section, the calculation of the exact order of $[1,r;4]$-mixed cages and a construction of a $[1,3;5]$-mixed graph in Subsection \ref{subsec:4-5}, and the exact order of the $[1,2;g]$-mixed cages in Subsection \ref{subsec:12g}.   In Section \ref{z14} we provide upper bounds for the order of $[z,1; 4]$-mixed cages.   Our final section is dedicated to discuss future lines of work and propose open problems.

\section{Lower and upper bounds of $[1,r;g]$-mixed cages} \label{1rg}

First of all, we prove the existence of the $[z, r; g]$-mixed cage. Recall that, Erd\"os and Sachs proved in 1963 (see \cite{ES63}), the existence of the $(r,g)$-graphs. 
 
\begin{theorem}
Let $z \ge 1, r \ge 1$ and $g \ge 3$ be integers.   There exists a $[z, r; g]$-mixed
graph.
\end{theorem}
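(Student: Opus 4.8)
**The plan is to construct a $[z,r;g]$-mixed graph explicitly by combining a known $(r,g)$-graph (whose existence is guaranteed by Erdős–Sachs) with a directed structure that does not create short mixed cycles.**

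The plan is to build the desired mixed graph on a vertex set of the form $V = W \times \mathbb{Z}_n$, where $W$ is the vertex set of an $(r,g)$-graph $H$ (which exists by Erdős–Sachs), and $n$ is a sufficiently large integer. First I would put $r$ edges at each vertex: within each ``layer'' $W \times \{i\}$ I would place a copy of the edge set of $H$, so that every vertex is incident with exactly $r$ edges and the undirected structure already has girth $g$. The remaining task is to add the arcs. The key idea is to add arcs only between consecutive layers, directed ``forward,'' i.e.\ every arc goes from some $(w,i)$ to some $(w',i+1)$. Concretely, for each vertex $(w,i)$ I would select $z$ out-neighbors in layer $i+1$ and arrange the choices so that each vertex also receives exactly $z$ arcs, making the graph $z$-regular by arcs. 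A clean way to guarantee this in-/out-regularity is to use a fixed $z$-regular bipartite ``pattern'' digraph between consecutive copies of $W$, repeated between every pair of adjacent layers, with indices taken modulo $n$.

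The crucial point is controlling the girth. Because every edge stays inside a single layer and every arc strictly increases the layer index by $1$, any mixed cycle $(v_0,\dots,v_k=v_0)$ must use arcs whose total forward displacement is a nonzero multiple of $n$ (the only way to return to the starting layer, since edges contribute zero displacement and arcs contribute $+1$ each). Hence a mixed cycle that uses at least one arc must contain at least $n$ arcs, so its length is at least $n$. Choosing $n \ge g$ then forces every cycle of length less than $g$ to use no arcs at all; but such a cycle lives entirely in one layer, which is a copy of $H$ and therefore has girth exactly $g$. This rules out short mixed cycles and short purely-directed cycles simultaneously, and guarantees the girth is at least $g$. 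Finally I would verify that a cycle of length exactly $g$ does exist (the girth-$g$ cycle inside any single layer), so the girth equals $g$ and the construction is a genuine $[z,r;g]$-mixed graph.

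The main obstacle I anticipate is arranging the arcs between layers so as to achieve exact $z$-regularity in \emph{both} in-degree and out-degree at every vertex, while keeping the argument that no short directed cycle is created; using a uniform bipartite pattern (for instance, $(w,i)\to(\sigma_1(w),i+1),\dots,(\sigma_z(w),i+1)$ for $z$ fixed permutations $\sigma_1,\dots,\sigma_z$ of $W$) handles the regularity cleanly, and the layered/monotone structure of the arcs is exactly what makes the girth argument go through, since no directed or mixed cycle can close up in fewer than $n$ steps. A minor point to check is that multiple edges and arcs are permitted, so I need not worry about the $\sigma_j$ being distinct or about collisions with the edges of $H$.
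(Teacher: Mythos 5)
Your proof is correct, and it rests on the same core idea as the paper's --- a product construction whose girth is controlled by projecting onto the directed coordinate --- but your instantiation is genuinely more self-contained, so the comparison is worth spelling out. The paper's proof takes the cartesian product $G \Box D$ of an $(r,g)$-graph $G$ (Erd\"os--Sachs) with a $(z,g)$-digraph $D$, and argues that any cycle containing an arc projects to a nontrivial closed directed walk in $D$, which must contain a directed cycle of $D$ and hence visit at least $g$ vertices. Your layered graph is essentially the special case in which $D$ is chosen explicitly: when all the permutations $\sigma_1,\dots,\sigma_z$ are the identity, your construction is exactly $H \Box D$ with $D$ the directed cycle of length $n\ge g$ whose every arc has multiplicity $z$, and your displacement count (the number of arcs in any closed mixed walk is a multiple of $n$, since edges contribute $0$ and arcs contribute $+1$) is precisely the paper's projection argument specialized to that $D$. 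What your route buys: you never need to invoke the existence of $z$-regular digraphs of girth at least $g$ --- the forward-only layered arcs exclude short directed and short mixed cycles automatically --- so the only external ingredient is Erd\"os--Sachs, and the whole construction is explicit. What the paper's route buys: brevity and generality, since any $(z,g)$-digraph (for instance the circulant digraphs of Behzad--Chartrand--Wall cited in the introduction) can serve as the directed factor. Your closing remark is also on point: because the paper explicitly allows parallel edges and arcs, the possible coincidences among the $\sigma_j(w)$ are harmless, and your twisted variant with arbitrary permutations still satisfies in- and out-degree $z$ at every vertex since each $\sigma_j$ is a bijection of $W$.
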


\begin{proof}
Let $D$ be a $(z,g)$-digraph, and let $G$ be an $(r,g)$-graph.   Let $H$ be the
cartesian product $G \Box D$ of mixed graphs.   Clearly, every cycle consisting
entirely of edges, and every cycle consisting entirely of arcs, has length at least
$g$, because it is contained in a $G$-fiber or in a $D$-fiber, respectively.

Let $C$ be a cycle in $H$ and consider its projection, $\pi_D (C)$, on $D$.
Clearly, $\pi_D (C)$ is a closed directed walk on $D$, and hence, it contains
a directed cycle of $D$.   Therefore, $\pi_D (C)$ uses at least $g$ vertices.
It follows that $C$ has length at least $g$.
\end{proof}

We are now ready to calculate a general lower bound for $[1,r;g]$-mixed cages.

\begin{theorem} \label{cota-inferior}
If $n[1, r; g]$ is the order of a $[1, r ; g]$-mixed cage, then $$n[1, r ;g]\ge n_0[1, r ;g]= \left\{
\begin{array}{lc} 2 \left( 1 + \sum_{i = 1}^{(g-3)/2} n_0 (r, 2i + 1) \right) + n_0 (r,g)
& \textnormal{if } g \textnormal{ is odd} \\ 2 \left( 1 + \sum_{i = 1}^{(g-2)/2} n_0
(r, 2i + 1) \right) & \textnormal{if } g \textnormal{ is even.}  \end{array} \right.$$

\end{theorem}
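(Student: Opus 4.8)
The plan is to run a Moore-type vertex count adapted to the arc structure. Since $z=1$, every vertex has a unique out-arc and a unique in-arc, so the arcs decompose $V(G)$ into vertex-disjoint directed cycles, each of length at least $g$ (an arc-only cycle is in particular a cycle of $G$). I would fix a root and look at the directed arc-cycle through it, written as a chain $\cdots \to v_{-1}\to v_0\to v_1\to\cdots$ with indices read cyclically; any $g$ consecutive vertices of this chain are pairwise distinct because they lie on a directed cycle of length $\ge g$. For odd $g$ I center the construction at a vertex $v_0$ and use the window $v_{-m},\dots,v_m$ with $m=(g-1)/2$ (so $2m+1=g$ vertices); for even $g$ I center at an arc $v_0\to v_1$ and use the window $v_{-(m-1)},\dots,v_m$ with $m=g/2$ (so $2m=g$ vertices).

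Next I would hang an \emph{edge-ball} on each chain vertex, with a radius that decreases as one moves away from the centre: in the odd case the ball around $v_k$ has radius $m-|k|$, and in the even case the radius decreases symmetrically from $m-1$ on the two vertices of the central arc down to $0$ at the ends of the window. Each such ball has radius $\rho\le (g-1)/2$ (odd) or $\rho\le g/2-1$ (even), hence $2\rho<g$; since $G$ has girth $g$ it contains no edge-cycle of length $2\rho$ or less, so the ball is a genuine tree carrying exactly $n_0(r,2\rho+1)$ vertices. Summing these Moore-tree sizes over the window reproduces the two displayed expressions termwise, the end vertices of the window contributing the $n_0(r,1)=1$ summands and the central vertex of the odd case contributing the extra $n_0(r,g)$ term.

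The heart of the argument, and the step I expect to be the main obstacle, is to show that distinct \emph{slots} $(k,P)$ — a chain index $k$ together with an edge-path $P$ from $v_k$ of length $a\le \mathrm{rad}(v_k)$ — represent distinct vertices. Suppose a vertex $w$ occupied two slots $(k,P)$ and $(k',P')$ with $k\le k'$. If $k=k'$, then $P,P'$ are two distinct edge-paths with the same endpoints and length at most $m$, so their union contains an edge-cycle of length at most $2m<g$, contradicting the girth. If $k<k'$, I concatenate the forward arc-segment $v_k\to v_{k+1}\to\cdots\to v_{k'}$ with the edge-path $v_{k'}\rightsquigarrow w$ and the reversed edge-path $w\rightsquigarrow v_k$; this is a closed walk in which every arc is traversed forward. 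A short triangle-inequality estimate on the indices bounds its length by $(k'-k)+a+b\le g-1$ in every sign configuration of $k$ and $k'$. Since the walk uses at least one arc, and an arc can never be undone by a backtrack (there is no reverse arc available), the walk does not reduce to the trivial walk and therefore contains a genuine cycle of $G$, of length at most $g-1<g$ and with all arcs in a common direction — again contradicting the girth.

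With disjointness established, the chosen slots exhibit exactly $n_0[1,r;g]$ pairwise distinct vertices of $G$, whence $n[1,r;g]\ge n_0[1,r;g]$. I would emphasize that the hypothesis $z=1$ is used essentially: it is precisely what forces the arcs to form a single chain through each vertex, so that ``moving along arcs'' is unambiguous and the length-$g$ window consists of distinct vertices. For $z\ge 2$ the arc structure branches and this counting would have to be reorganized.
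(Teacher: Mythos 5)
Your proof is correct and takes essentially the same approach as the paper: the paper also builds a directed path on $g$ vertices and hangs undirected Moore trees of symmetrically varying radii ($0,1,\dots$ up to $(g-1)/2$ for odd $g$, or up to $g/2-1$ twice for even $g$, and back down) at its vertices, then counts, using the girth to force all vertices to be distinct. The only difference is presentational: you index the path centrally via the arc-cycle through the root and spell out in detail the distinctness argument (the mixed closed walk of length at most $g-1$ containing a forbidden cycle), which the paper compresses into the remark that it ``is not difficult to check.''
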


\begin{proof}
Recall that we introduced a tree of radius $(g-1)/2$, denoted by ${\cal{T}}_{(g-1)/2}$, to obtain the lower bound for $(r,g)$-cages, for odd $g$ (see (1) above).   We will also use it to give the lower bound of the $[1,r;g]$-cages. 

The idea of the proof is to construct mixed trees that should be contained as a subgraph of any $[1,r; g]$-cage. We have two cases, depending of the parity of $g$.   In both cases we start the construction of the tree with a directed path $(x_0, \dots ,x_{g-1})$ of length $g$.

\begin{itemize}
\item When $g\geq 3$ is odd,  we root at $x_i$ the tree ${\cal{T}}_{i}$ for $i\in \{0,\ldots,(g-1)/2\}$ and the tree ${\cal{T}}_{g-i-1}$ for $i\in \{(g+1)/2,\ldots,g-1\}$. Figure \ref{fig:cuello7} depicts the resulting mixed tree for $g=7$.

\begin{figure}[ht] 
\begin{center}
 \includegraphics[width=1\textwidth]{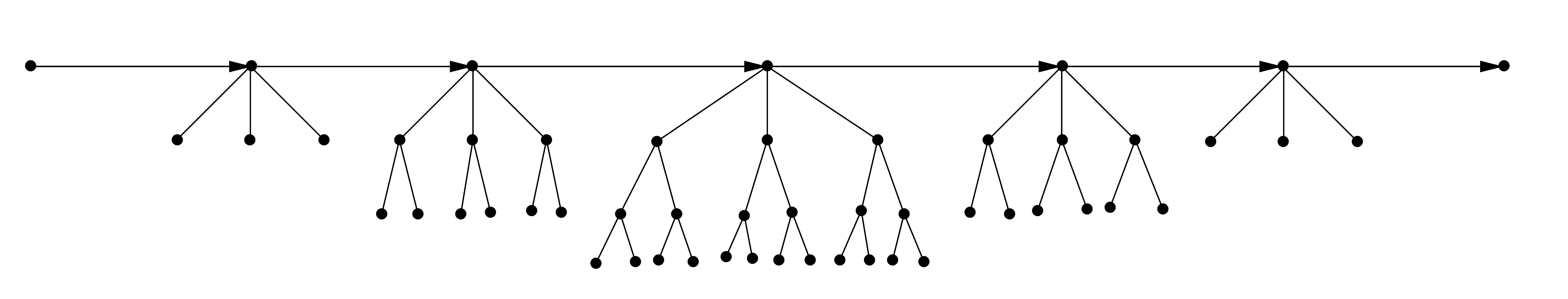}
 \caption{\small $n_0[1, 3 ;7]$}
  \label{fig:cuello7} 
\end{center}
\end{figure}

\item If $g\geq 4$ is even. We root at $x_i$ the tree ${\cal{T}}_{i}$ for $i\in \{0,\ldots,(g-2)/2\}$ and the tree ${\cal{T}}_{g-i-1}$ for $i\in \{g/2,\ldots,g-1\}$. Figure \ref{fig:cuello6} depicts the resulting mixed tree for $g=6$. 

\begin{figure}[ht] 
\begin{center}
\includegraphics[width=1\textwidth]{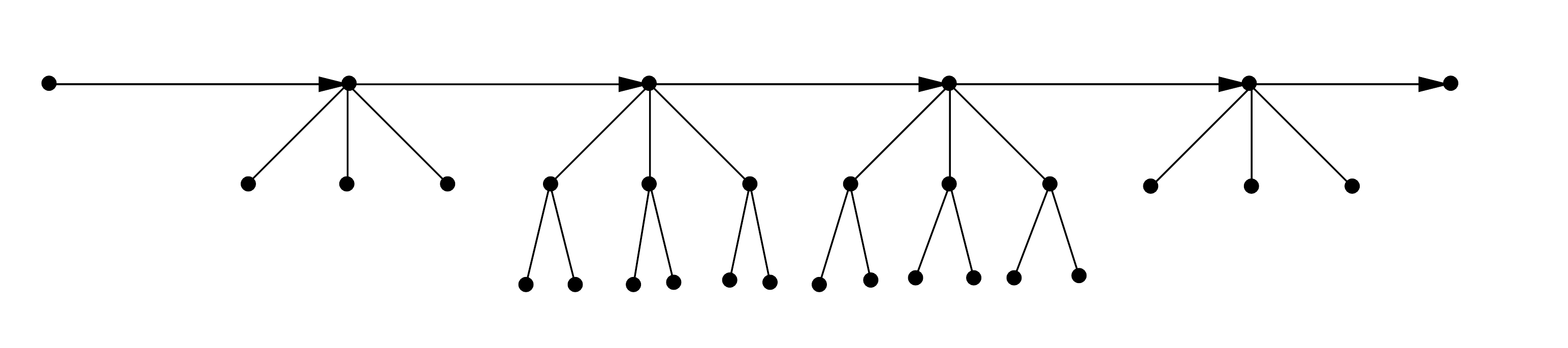}
\caption{\small $n_0[1, 3 ;6]$}
\label{fig:cuello6} 
\end{center}
\end{figure}

\end{itemize}

In both cases, it is not difficult to check that the vertices of the trees are all different, because otherwise we would immediately obtain a cycle of length smaller than $g$. Consequently, every $[1,r;g]$-cage has one of these mixed trees as subgraphs. Counting the number of vertices of the trees in each case we obtain the given lower bound directly.
\end{proof}

It is not hard to observe that one of the mixed trees constructed in the proof of Theorem \ref{cota-inferior} should be contained in any $[1,r;g]$-cage as an induced subgraph, except maybe for the edges and arcs between some leafs of the tree.   This observation will be useful later.

\subsection {The cases $g=4$ and $g=5$.} \label{subsec:4-5}

For $g=4$ we characterize all the non-isomorphic $[1, r; 4]$-mixed cages, and,  for $g=5$, we construct a family of $[1, r; 5]$-mixed graphs that provide us an upper bound for the $[1, r; 5]$-mixed graphs.

We begin analyzing the case $g = 4$.   By Theorem \ref{cota-inferior} we have that $n_0[1, r; 4]=2(r+2)$. Moreover, we have the following result:
\begin{theorem}\label{girth4}
Let $r \ge 1$ be an integer. Every  $[1, r; 4]$-cage can be obtained from
$K_{r+2, r+2}$ by taking a $2$-factor $F$ and orienting every cycle of $F$ as a directed cycle.
\end{theorem}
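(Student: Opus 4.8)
The plan is to forget the arc directions, show that the resulting underlying graph $U$ is forced to be $K_{r+2,r+2}$ by an extremal (Mantel-type) argument, and then read the arcs back in as the oriented $2$-factor. By Theorem \ref{cota-inferior} a $[1,r;4]$-cage $G$ has exactly $n=2(r+2)$ vertices, and this exact order will be the crucial ingredient. First I would record the local structure. Since $G$ is $1$-regular by arcs, every vertex is the tail of exactly one arc and the head of exactly one arc, so the arcs form a permutation of $V(G)$, i.e.\ a disjoint union of directed cycles covering all vertices. Because $g=4$ there is no cycle of length $2$; running through the possibilities (two parallel edges, an edge together with an arc on the same pair, a digon, or a loop) shows that between any two vertices there is at most one connection and there are no loops. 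Hence the graph $U$ obtained from $G$ by replacing each arc by an undirected edge is simple, and at each vertex the $r$ edge-neighbours together with the out-arc and in-arc neighbours are all distinct; thus $U$ is $(r+2)$-regular on $2(r+2)$ vertices, with exactly $(r+2)^2$ edges.

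The key step is to prove that $U$ is triangle-free. Suppose $u,v,w$ span a triangle in $U$. If the three corresponding connections in $G$ can be traversed consistently around the triangle they form a mixed $3$-cycle, contradicting $g=4$ (an all-edge triangle is such a cycle automatically). Otherwise the triangle is not a mixed cycle, which forces two of its arcs to disagree in orientation around the triangle; on a triangle this can only happen when the two disagreeing arcs share a common head or a common tail. But a common tail contradicts out-degree $1$ and a common head contradicts in-degree $1$. So every triangle is ruled out, and $U$ is triangle-free.

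Now I would invoke Mantel's theorem: a triangle-free graph on $n$ vertices has at most $\lfloor n^2/4\rfloor$ edges, with equality only for the balanced complete bipartite graph. Here $n=2(r+2)$ gives $\lfloor n^2/4\rfloor=(r+2)^2$, which is exactly the edge count of $U$; since $n$ is even the equality case yields $U\cong K_{r+2,r+2}$. Finally, reading the arcs back into this $K_{r+2,r+2}$: as an undirected subgraph they are $2$-regular and spanning, hence a $2$-factor $F$, and the permutation structure means each component of $F$ is consistently oriented as a directed cycle, while the remaining $r$ edges at each vertex are exactly the undirected edges of $G$. This is precisely the asserted construction. I expect the main obstacle to be the triangle-free step together with the realization of \emph{how} it must be used: girth $4$ does not forbid mixed $5$-cycles, so one cannot argue bipartiteness directly by excluding odd cycles. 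The whole parity constraint has to be funnelled into triangle-freeness, and it is only the coincidence that the cage's exact order makes the edge count meet the Mantel bound that then forces the balanced complete bipartite structure.
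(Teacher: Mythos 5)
There is one genuine (though repairable) gap, and it sits exactly at the step you yourself call crucial. Theorem \ref{cota-inferior} gives only the \emph{lower} bound $n[1,r;4]\ge 2(r+2)$; it does not give the exact order. To know that a $[1,r;4]$-cage has exactly $2(r+2)$ vertices you must also exhibit some $[1,r;4]$-mixed graph on $2(r+2)$ vertices, i.e.\ verify that orienting a $2$-factor of $K_{r+2,r+2}$ really produces girth $4$: no mixed $2$- or $3$-cycles (which follows from simplicity and bipartiteness of the underlying graph), and at least one mixed $4$-cycle (which needs a short argument, since for small $r$ the unoriented edges alone contain no $4$-cycle, so the cycle must use arcs). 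This verification is precisely how the paper's proof opens, and your argument cannot bypass it: if all you know is $n\ge 2(r+2)$, then comparing the edge count $n(r+2)/2$ of $U$ with Mantel's bound $n^2/4$ merely reproves the inequality $n\ge 2(r+2)$ and forces no structure whatsoever. The equality case of Mantel's theorem, which is the engine of your proof, becomes available only after the matching upper bound is in hand.

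With that observation supplied, your proof is correct and takes a genuinely different route from the paper's. The paper re-uses the mixed tree from the proof of Theorem \ref{cota-inferior}: a cage contains a directed path $(x,y,z,w)$ with stars rooted at $y$ and $z$; exactness of the order makes this tree spanning, and the girth and regularity constraints are then used to force all remaining adjacencies between the leaves, exhibiting the bipartition $(\{x,z\}\cup N(y),\ \{y,w\}\cup N(z))$ explicitly. You instead pass to the underlying simple graph $U$, prove it is triangle-free --- your common-head/common-tail analysis of a non-traversable triangle is correct, and is exactly where $1$-arc-regularity enters --- and then let the uniqueness part of Mantel's theorem do all the structural work. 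Your route buys a cleaner argument that avoids the paper's rather informal forcing step (``it follows that $x$ is adjacent to every vertex in $N(z)\cup\{w\}$\dots''), at the price of invoking the extremal characterization; it also isolates nicely the point you make at the end, that bipartiteness of $U$ cannot be derived by excluding odd mixed cycles (girth $4$ permits mixed $5$-cycles), but only through triangle-freeness combined with the coincidence that the cage's order makes the edge count meet the Mantel bound exactly.
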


\begin{proof}
It follows from Theorem \ref{cota-inferior} that a $[1,r;4]$-mixed cage
has at least $2(r+2)$ vertices.   Observe that every mixed graph
obtained from $K_{r+2, r+2}$ by orienting every cycle in one of its
$2$-factors as a directed cycle, is a $[1,r;4]$-mixed graph on
$2(r+2)$ vertices.   Therefore, $[1,r;4]$-mixed cages have $2(r+2)$
vertices.

Now, let $H$ be a $[1,r;4]$-mixed cage.   As in the argument of
Theorem \ref{cota-inferior}, $H$ must contain a directed path of
length $3$, $P = (x, y, z, w)$, and the trees rooted at $y$ and $z$
are just stars.   Let $N(y)$ and $N(z)$ be $\{ y_1, \dots, y_r \}$
and $\{ z_1, \dots, z_r \}$, respectively.   

Since a $[1,r,4]$-mixed cage has precisely $2(r+2)$ vertices, we
have $V(H) = \{ x, y, z, w \} \cup N(y) \cup N(z)$, and these are
precisely the vertices of the mixed tree used in the proof of Theorem
\ref{cota-inferior}.   As we have already observed, the remaining
adjacencies of $H$ should be between the leaves of this tree.   But,
since the girth of $H$ is four, in order to fulfill the regularity restrictions
for each vertex, it follows that $x$ is adjacent to every vertex in $N(z)
\cup \{ w \}$ and $w$ is adjacent to every vertex in $N(y) \cup x$.  From
here, it is also clear that every vertex in $N(y)$ should be adjacent
to every vertex in $N(z)$.

Therefore, $(\{ x, z \} \cup N(y), \{ y, w \} \cup N(z))$ is a bipartition of
$H$.   Moreover, the underlying graph of $G$ is complete bipartite.
Hence, every $(1,r,4)$-mixed cage is obtained from $K_{r+2, r+2}$
by orienting the cycles of a $2$-factor as directed cycles.
\end{proof}

Therefore, $[1,r;4]$-cages are completely characterized by Theorem \ref{girth4}.   Figure \ref{fig:Jaulas}  depicts two non-isomorphic $[1, 2; 4]$-mixed cages, both are partial orientations of $K_{4,4}$. 

 \begin{figure}[ht] 
\begin{center}
 \includegraphics[width=.6\textwidth]{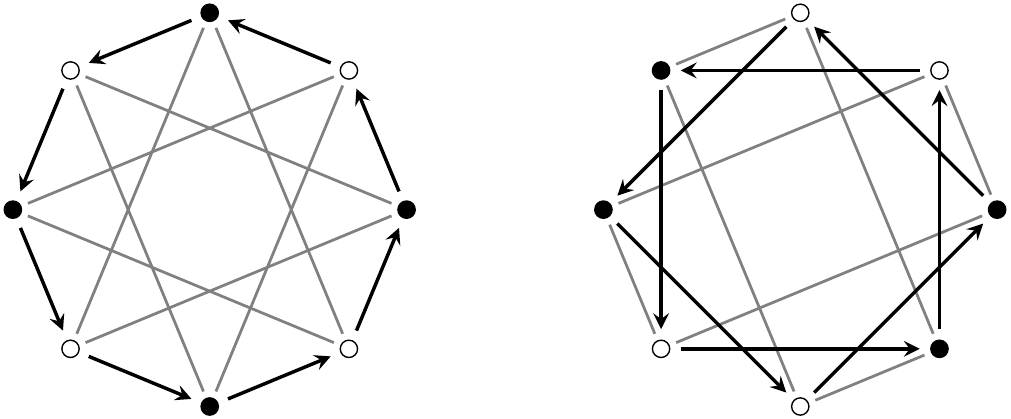}
 \caption{\small Two non-isomorphic $[1,2;4]$-mixed cages.}
  \label{fig:Jaulas} 
\end{center}
\end{figure}

Now, we study the mixed graphs of girth $5$.   Since in the following section we give a general construction for $[1, 2; g]$-mixed cages for any value of $g$, in this section we start with the study of $[1, 3; 5]$-mixed graphs.

\begin{lemma}\label{girth5}. The order of a $[1, 3; 5]$-mixed cage is bounded as follows: 
$$ 20 \leq n[1, 3; 5] \leq 28 $$
\end{lemma}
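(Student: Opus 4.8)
The plan is to establish the two bounds independently. The lower bound $n[1,3;5]\ge 20$ follows at once from Theorem~\ref{cota-inferior}. Taking $r=3$ and $g=5$, which is odd, we have $(g-3)/2=1$, so the sum there consists of the single term $n_0(3,3)$ and the formula reads $n_0[1,3;5]=2\bigl(1+n_0(3,3)\bigr)+n_0(3,5)$. Using Moore's bound~(\ref{lowercages}) we compute $n_0(3,3)=1+3=4$ and $n_0(3,5)=1+3+3\cdot 2=10$, whence $n_0[1,3;5]=2(1+4)+10=20$. Thus every $[1,3;5]$-mixed graph, and in particular the cage, has at least $20$ vertices.

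For the upper bound I would exhibit an explicit $[1,3;5]$-mixed graph $H$ on $28$ vertices; since a cage has minimum order, this gives $n[1,3;5]\le 28$ immediately. The design principle I would follow is to keep the two kinds of adjacency ``far apart'': let the underlying undirected graph of $H$ be a cubic graph of girth at least $5$, so that every cycle using only edges has length at least $5$, and let the arcs form a vertex-disjoint union of directed cycles each of length at least $5$, so that every cycle using only arcs has length at least $5$. With this template, arc-$1$-regularity and edge-$3$-regularity are built in by construction, and the only remaining task is to guarantee that the girth is exactly $5$.

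The core of the argument, and the main obstacle, is to rule out short \emph{mixed} cycles, that is, cycles using both edges and arcs with all arcs traversed forward. Here the eight extra vertices above the lower bound are essential: the natural $(3,5)$ building block is the Petersen graph, which has diameter $2$, so placing any arc between two of its vertices instantly closes either a $2$-cycle (an arc parallel to an edge) or a mixed triangle (an arc together with an edge-path of length two). The arcs must therefore join vertices that are pairwise far apart in the edge-metric. Concretely, I would verify that for every arc $u\to v$ the edge-distance $d_G(v,u)$ is large, and, more generally, that no closed alternating walk of length less than $5$ exists; this reduces to a finite case analysis according to the number of arcs occurring in a hypothetical short cycle (one, two, three, or four arcs), bounding in each case the number of edges needed to close it up.

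I expect that forcing all three conditions—arc-regularity, edge-regularity, and absence of short mixed cycles—to hold simultaneously pins down a rather rigid graph, so the construction is likely to be presented through an explicit adjacency description (and its girth then checked directly) rather than obtained from a generic product as in the existence theorem, which would give the far weaker bound $10\cdot 5=50$. Exhibiting a single $5$-cycle in $H$ shows the girth is not larger than $5$, so $H$ is genuinely a $[1,3;5]$-mixed graph and we conclude $20\le n[1,3;5]\le 28$.
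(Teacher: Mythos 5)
Your lower bound argument is complete and coincides with the paper's: apply Theorem~\ref{cota-inferior} with $r=3$ and $g=5$ to get $n_0[1,3;5]=2\bigl(1+n_0(3,3)\bigr)+n_0(3,5)=2(1+4)+10=20$, using the values $n_0(3,3)=4$ and $n_0(3,5)=10$ from~(\ref{lowercages}).

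The upper bound, however, is where the entire content of the lemma lies, and your proposal does not prove it. You describe a design template --- underlying cubic edge-graph of girth at least $5$, arcs forming vertex-disjoint directed cycles of length at least $5$, and a case analysis (by the number of arcs) to exclude short mixed cycles --- but you never exhibit a concrete mixed graph on $28$ vertices, and you never carry out the case analysis on any actual object. An inequality $n[1,3;5]\le 28$ can only be established by producing a specific $[1,3;5]$-mixed graph of order $28$; in your argument the number $28$ plays no role and nothing in the template singles it out. This is exactly what the paper supplies: it takes $V(H)=\{0,\dots,27\}$, lets the arcs be the four vertex-disjoint directed $7$-cycles $(i,i+1,\dots,i+6,i)$ for $i\in\{0,7,14,21\}$, lists the edge-neighborhoods of the vertices $0,\dots,13$ explicitly (a cubic structure joining $\{0,\dots,13\}$ to $\{14,\dots,27\}$), checks the degree conditions directly, and verifies $g(H)=5$ by computer. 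Your template is in fact consistent with that graph (its arcs are disjoint directed cycles of length $7\ge 5$ and its edge-graph is cubic), and your side remark that the cartesian-product construction from the existence theorem would only give $10\cdot 5=50$ vertices correctly explains why an ad hoc construction is needed --- but a description of what a construction \emph{should} look like does not substitute for the construction itself, so the bound $n[1,3;5]\le 28$ remains unestablished in your write-up.
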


\begin{proof}
The lower bound is given by Theorem \ref{cota-inferior},  and the upper bound
is obtained by exhibiting the following $[1, 3; 5]$-mixed graph $H$ of order $28$. 

The set of vertices of $H$ is $V(H) = \{ 0, \dots, 27 \}$.   The arcs of $H$ are those
of the directed cycles $(i, i+1, \dots, i+6, i)$ for $i \in \{ 0, 7, 14, 21 \}$.   For the
edges of $H$, we give the neighborhoods of the vertices $\{ 0, \dots, 13 \}$,
the remaining adjacencies can be deducted from here.
$N(0) = \{ 14, 17, 21 \}$,
$N(1) = \{ 15, 18, 22 \}$,
$N(2) = \{ 16, 19, 23 \}$,
$N(3) = \{ 17, 20, 24 \}$,
$N(4) = \{ 14, 18, 25 \}$,
$N(5) = \{ 15, 19, 26 \}$,
$N(6) = \{ 16, 20, 27 \}$,
$N(7) = \{ 14, 23, 27 \}$,
$N(8) = \{ 15, 21, 24 \}$,
$N(9) = \{ 16, 22, 25 \}$,
$N(10) = \{ 17, 23, 26 \}$,
$N(11) = \{ 18, 24, 27 \}$,
$N(12) = \{ 19, 21, 25 \}$,
$N(13) = \{ 20, 22, 26 \}$.
The mixed graph $H$ is depicted in Figure \ref{[1,3;5]}.

It is direct to check that $H$ has the desired degree constraints.
We verified $g(H) = 5$ through the use of a computer.
\begin{figure}
\begin{center}
\includegraphics[width=.5\textwidth]{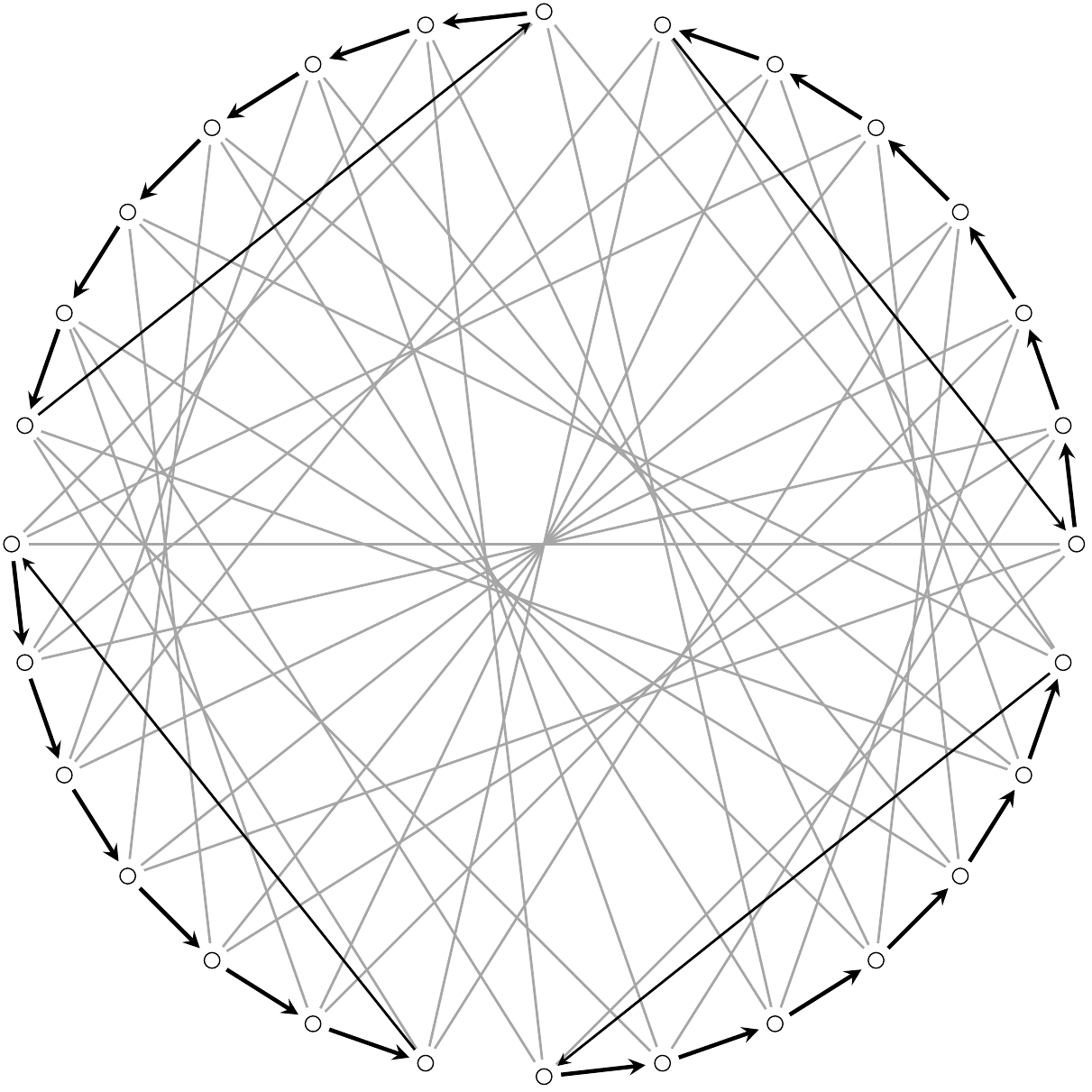}
\caption{A $[1,3;5]$-mixed graph.} \label{Figure4}
\end{center}
\end{figure}
\end{proof}

\subsection {The $[1, 2; g]$-mixed cages. } \label{subsec:12g}
Theorem \ref{cota-inferior} gives us two different bounds depending on the parity of the girth, and two different constructions to obtain these bounds. 

\begin{itemize}

\item For odd $g$, we have that:
$$n_0[1, 2; g]=2(1 + \sum_{i = 1}^{(g-3)/2} n_0 (2, 2i + 1) ) + n_0 (2,g))$$
Using the known fact that the $(2,g)$-cages are the cycles of length $g$, we have that $n_0[1, 2; g]=\frac{g^2+1}{2}$.

For each odd integer $g\geq 3$ let $C_{\frac{g^2+1}{2}}(\{g\}; \{1\})$ be the {\it circulant mixed digraph} such that  $\{x_0, \dots, x_{\frac{g^2-1}{2}} \}$ is it set of vertices; $\{x_ix_{i+g} :  0\leq i \leq  \frac{g^2-1}{2}\}$ is it set of arcs and $\{x_ix_{i+1} :  0\leq i \leq  \frac{g^2-1}{2} \}$ is it set of edges (considering the indexes modulo $\frac{g^2+1}{2}$).

Now, we have the following result:

\begin{lemma}\label{12,impar}
 For every integer $g\geq 3$  odd, the circulant mixed digraph $C_{\frac{g^2+1}{2}}(\{g\}; \{1\})$ is a $[1, 2; g]$-mixed graph of order $\frac{g^2+1}{2}$. 
\end{lemma}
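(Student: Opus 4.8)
The plan is to set $n=\frac{g^2+1}{2}$ and check the three defining properties in turn. The order is $n$ by construction, and both regularity conditions are immediate: each $x_i$ is the tail of the single arc $x_ix_{i+g}$ and the head of the single arc $x_{i-g}x_i$, so the graph is $1$-regular by arcs, while $x_i$ is incident with exactly the two edges $x_{i-1}x_i$ and $x_ix_{i+1}$, so it is $2$-regular by edges (for $g\ge 3$ one has $g\not\equiv\pm1$ and $2\not\equiv 0 \pmod n$, so no arc coincides with an edge and no multiple edges occur). The whole content is therefore the girth, and the key observation is that traversing an arc changes the index by $+g$ (arcs may only be used in their direction) whereas traversing an edge changes it by $\pm1$. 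Hence a closed walk using $a$ arcs and $b$ edges has net index displacement $ag+s$, where $s$ is the signed sum of its edge steps, so $|s|\le b$ and $ag+s\equiv 0\pmod n$. Throughout I would use the identity $2n=g^2+1$.

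For the upper bound I would exhibit an explicit $g$-cycle. Choosing $a=\frac{g-1}{2}$ arcs and $b=\frac{g+1}{2}$ forward edges yields total displacement $\frac{g-1}{2}\,g+\frac{g+1}{2}=\frac{g^2+1}{2}=n\equiv 0\pmod n$ and length $a+b=g$. Ordering these $g$ positive steps in any way, the partial sums strictly increase from $0$ to $n$, so all intermediate vertices are distinct modulo $n$; the closed walk is thus a genuine cycle and $g(H)\le g$.

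For the lower bound, let $C$ be any cycle of length $\ell=a+b$ and write $ag+s=kn$. I would bound $\ell\ge a+|s|=a+|kn-ag|=:h(a)$ and dispose of the easy ranges of $k$. When $k\le -1$, from $ag\ge 0$ and $s\ge -b$ one gets $kn\ge -\ell$, forcing $\ell\ge n\ge g$. When $k\ge 2$, a short estimate gives $h(a)\ge kn/g\ge 2n/g=g+\frac{1}{g}>g$. When $k=0$ the displacement is $ag=-s$ with $|s|\le b$; if $a\ge 1$ this forces $b\ge g$, hence $\ell>g$, while $a=0$ gives a cycle using only edges, which in the edge-cycle $C_n$ can only be the full cycle of length $n\ge g$.

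The main obstacle is the remaining case $k=1$, where one must rule out a cycle of length $g-1$. Here $h(a)=a+|n-ag|$ is a ``V-shaped'' function of the integer $a$, decreasing with slope $-(g-1)$ while $ag\le n$ and increasing with slope $g+1$ afterwards; its integer minimum is attained at the two integers $a=\frac{g-1}{2}$ and $a=\frac{g+1}{2}$ nearest to $n/g=\frac{g}{2}+\frac{1}{2g}$, and in both cases the identity $2n=g^2+1$ makes $h(a)=g$ exactly. Thus $\ell\ge g$ in every case, so $g(H)\ge g$, and combined with the construction above we conclude $g(H)=g$, completing the proof.
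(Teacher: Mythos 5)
Your proof is correct, and its core is the same as the paper's: both rest on the displacement interpretation (an arc shifts the index by $+g$, an edge by $\pm 1$; hence a closed walk with $a$ arcs and signed edge-sum $s$ satisfies $ag+s\equiv 0 \pmod{n}$, where $n=\frac{g^2+1}{2}$), and both exhibit the identical $g$-cycle with $\frac{g-1}{2}$ arcs and $\frac{g+1}{2}$ forward edges to get girth at most $g$. Where you genuinely differ is in the organization and completeness of the lower bound. The paper argues by contradiction on a minimal cycle of length $r=t+h<g$ and ``normalizes'' it so that the $h$ edges form a contiguous forward path $x_0,\dots,x_h$ and the $t$ arcs form a directed path joining $y_h$ and $x_0$; the contiguity part of this reduction is harmless for a counting argument, but the assumption that all edges are traversed in the same direction is a real restriction: it only treats the signed sums $s=\pm h$, i.e., only the two equations $h+gt\equiv 0$ and $gt\equiv h\pmod{n}$, and never addresses cycles whose edges go in both directions (nor, explicitly, winding numbers other than $1$). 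Your argument, by allowing an arbitrary signed sum $|s|\le b$ and an arbitrary integer $k$ with $ag+s=kn$ --- dispatching $k\le -1$, $k=0$, $k\ge 2$ quickly and handling $k=1$ via the V-shaped minimization of $a+|kn-ag|$, whose integer minimum $g$ is attained at $a=\frac{g\mp1}{2}$ --- covers exactly those configurations the paper's ``without loss of generality'' skips. What each approach buys: the paper's version is shorter, reducing to two integrality computations; yours is strictly more rigorous and would survive refereeing of that WLOG step, at the cost of a longer case analysis.
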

\begin{proof}
By definition $C_{\frac{g^2+1}{2}}(\{g\},\{1\})$ has order $\frac{g^2+1}{2}$ and $$C=(x_0,x_g,x_{2g},\dots,x_{(\frac{g-1}{2})g},x_{(\frac{g-1}{2})g+1},\dots,x_{(\frac{g-1}{2})g+{\frac{g+1}{2}}}=x_0)$$  is a cycle of length $g$, then $C_{\frac{g^2+1}{2}}(\{g\},\{1\})$ has girth at most $g$. 

Let $C=(y_0, y_1, \dots, y_{r-1}, y_r=y_0 )$ be a cycle of minimal lenght  of $C_{\frac{g^2+1}{2}}(\{g\}; \{1\})$ with  $t$ arcs, $h$ edges and let us suppose  $r=t+h< g$.  

Without loss of generality we can suppose that  for each $i$, with $0\leq i \leq h-1$, $y_iy_{i+1} $ is an edge, and that for each $i$, with $h\leq i \leq r-1$, $y_iy_{i+1} $ is an arc.  Also, we can suppose that for each $i$, with $0\leq i \leq h$, $x_i = y_i$. Thus, $(y_h, \dots, y_r= x_0)$ is either a $y_hx_0$-directed path, or a $x_0y_h$-directed path.  If $(y_h, \dots, y_r= x_0)$ is a  $y_hx_0$-directed path it follows that $(h+gt) \cong 0$  $(mod$  $\frac{g^2+1}{2})$ and since $r=t+h< g$ we see that  $h + gt = \frac{g^2+1}{2}$. Thus $gt = \frac{g^2+1}{2} -h$ and therefore $t = \frac{g}{2} + \frac{1}{2g} - \frac{h}{g}$.  Since $h< g$ and since $\frac{g}{2} + \frac{1}{2g} - \frac{h}{g}$ is an integer it follows that $t= \frac{g-1}{2}$  and therefore $0 = \frac{1}{2} + \frac{1}{2g} - \frac{h}{g}$ which implies that $h= \frac{g+1}{2}$ but then $g> r = t+h = g$ which is a contradiction.  If $(y_h, \dots, y_r= x_0)$ is a $x_0y_h$-directed path, since $h< g$ it follows that $t\geq 2$ and $tg \cong h$ $(mod$  $\frac{g^2+1}{2})$, and since $t< g$ we see that $gt = \frac{g^2+1}{2} + h$.  Thus $t = \frac{g}{2} + \frac{1}{2g} + \frac{h}{g}$  and therefore,  since $h< g$ and since $\frac{g}{2} + \frac{1}{2g} + \frac{h}{g}$ is an integer, we see that $t =  \frac{g+1}{2}$ and $h = \frac{g-1}{2}$ which implies that  
$g> r = t+h = g$ which is a contradiction.\end{proof}

\item For even $g$ we have that: 

$$n_0[1, 2; g]=2 \left( 1 + \sum_{i = 1}^{(g-2)/2} n_0
(2, 2i + 1) \right)$$

Again, as the $(2,g)$-cages are the cycles of length $g$, we have that, in this case, $n_0[1, 2; g]=\frac{g^2}{2}$.

 Now, we define a graph $H_g$ taking first a mixed graph $G_g=C_g\Box  \overrightarrow{P_{g/2}}$ which is the cartesian product of an undirected cycle $C_g=(0, \dots, g-1,0)$ of length $g$ with a directed path $\overrightarrow{P_{g/2}}=(0, \dots, g/2 - 1)$ of length $\frac{g}{2}$, and adding to $G_g$ the set of arcs $A=\{((i, g/2 - 1), (i + g/2, 0)),\  \mbox{for} \ 0\leq i\leq g-1 \}$ (mod $g$) to obtain $H_g$.

Now, we will prove that: 
\begin{lemma}\label{12,par}
The mixed graph $H_g$ is a $[1,2; g]$-mixed graph of order $\frac{g^2}{2}$.
\end{lemma}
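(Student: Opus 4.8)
The plan is to verify the three defining properties of a $[1,2;g]$-mixed graph separately: the order, the two regularities, and the girth. The order and the regularities are immediate from the construction. The vertex set of $H_g$ is $\{(i,j) : i \in \mathbb{Z}_g,\ 0 \le j \le g/2-1\}$, which has $g\cdot(g/2)=g^2/2$ elements. The undirected edges of $H_g$ come only from the $C_g$-factor, so each vertex $(i,j)$ is joined by edges exactly to $(i-1,j)$ and $(i+1,j)$, giving undirected degree $2$. For the arcs, each $(i,j)$ with $j<g/2-1$ has the single out-arc $(i,j)\to(i,j+1)$ inherited from $\overrightarrow{P_{g/2}}$, while each $(i,g/2-1)$ has the single out-arc of $A$ to $(i+g/2,0)$; dually, every vertex receives exactly one in-arc. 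Hence $H_g$ is $1$-regular by arcs and $2$-regular by edges, so only the girth remains.

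Next I would bound the girth from above by exhibiting a $g$-cycle: for any fixed level $j$, the edges of the $C_g$-factor form the cycle $((0,j),(1,j),\dots,(g-1,j),(0,j))$ of length $g$, so $g(H_g)\le g$. The heart of the proof is the reverse inequality, that every cycle of $H_g$ has length at least $g$. The idea I would use is to track, along a cycle $C$, two coordinates: the genuine integer level $j\in\{0,\dots,g/2-1\}$ and the cyclic coordinate $i\in\mathbb{Z}_g$. Each move changes them in a controlled way: an edge changes $i$ by $\pm 1$ and fixes $j$; a path arc fixes $i$ and raises $j$ by $1$; and the added arc raises $i$ by $g/2$ (mod $g$) while lowering $j$ by $g/2-1$.

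Let $C$ have $p$ path arcs, $q$ added arcs, and $h$ edges, so its length is $\ell=p+q+h$. Since $j$ is an honest integer coordinate that returns to its starting value, the signed changes of $j$ around $C$ sum to zero, and this would give the clean identity $p=q(g/2-1)$, whence the total number of arcs equals $p+q=q\cdot g/2$. I would then split into cases according to $q$. If $q=0$, then $p=0$ as well, so $C$ uses only edges; such a cycle stays in one level-fiber, where the edges form the single cycle $C_g$, forcing $\ell=g$. If $q\ge 2$, then $C$ already contains at least $g$ arcs, so $\ell\ge g$ at once.

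I expect the remaining case $q=1$ to be the main obstacle, since there $C$ has exactly $g/2$ arcs and one must separately force it to carry at least $g/2$ edges. Here I would bring in the cyclic coordinate $i$: its total displacement around $C$ is $\equiv 0$ (mod $g$), and since the path arcs contribute $0$ while the single added arc contributes $g/2$, the net edge contribution $E$ must satisfy $E\equiv -g/2\equiv g/2$ (mod $g$). As $E$ is a sum of $h$ terms each $\pm 1$, we have $|E|\le h$, and since $E\not\equiv 0$ (mod $g$) this yields $|E|\ge g/2$, hence $h\ge g/2$. Therefore $\ell=g/2+h\ge g$ in this case too. Combining all cases gives $g(H_g)\ge g$, and with the fiber cycle above we conclude $g(H_g)=g$, so $H_g$ is a $[1,2;g]$-mixed graph of order $g^2/2$.
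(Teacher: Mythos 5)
Your proof is correct and follows essentially the same route as the paper: both arguments classify a mixed cycle $C$ by the number $q$ of ``wrap-around'' arcs from $A$ that it uses, observe that $q \ge 2$ forces at least $g$ arcs (your identity $p = q(g/2-1)$ is exactly the paper's ``at least $g-2$ upward arcs''), that $q = 1$ forces exactly $g/2$ arcs, and then extract at least $g/2$ edges from the horizontal displacement. The only difference is one of execution: where the paper anchors $C$ at $(0,0)$ via vertex-transitivity and counts edges geometrically (at least $i$ to reach column $i$, at least $g/2-i$ to return, with subcases on $i$), you encode both steps as conservation identities --- the telescoping of the level coordinate and a congruence mod $g$ for the cyclic coordinate --- which is a cleaner write-up of the same idea.
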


\begin{proof}
By definition $H_g$ is a $1$-arc-regular, $2$-edge-regular mixed graph of order
$\frac{g^2}{2}$. We will verify that the girth of $H_g$ is $g$.

Note that every cycle consisting entirely of edges or entirely of arcs has length
exactly $g$.  Consider $C$ a cycle using at least one edge and at least one arc.
Since $H_g$ is clearly vertex-transitive, we will assume without loss of generality
that $(0,0)$ is the first (and last) vertex of $C$.   Assume that $H_g$ is drawn in the
plane in a grid-like fashion, with $P_g$ in the $X$-axis and $\overrightarrow{P_{g/2}}$
in the $Y$-axis.

The only downward arcs are in $A$, hence, at least one of such arcs should be used by $C$.   
Note that if two of these arcs are used in $C$, then at least $g - 2$ upward arcs are used
by $C$, and hence, its length is at least $g$.  So, suppose that the only
downward arc used by $C$ is $((i, g/2 - 1), (i + g/2, 0))$ (mod $g$) for some fixed
$0 \leq i \leq g-1$. Hence, $C$ uses exactly $g/2$ arcs.

We will consider the case when $i \le g/2 - 1$, the remaining case can be dealt
similarly.   At least $i$ edges are used to reach $(i, g/2 - 1)$ from $(0,0)$, and
at least $g/2 - i$ edges are used to reach $(0,0)$ from $(i + g/2, 0)$.   Hence,
$C$ uses at least $i + g/2 - i = g/2$ edges.   Together with the $g/2$ arcs, the
length of $C$ is at least $g$.
\end{proof}

\end{itemize}

 From the  previous results  we have the following theorem:

\begin{theorem} For any integer $g\geq 3$, the $[1, 2; g]$-mixed cages have
order $\frac{g^2 + 1}{2}$ if $g$ is odd, and $\frac{g}{2}$ if $g$ is even.
\end{theorem}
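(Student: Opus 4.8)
The plan is to assemble the theorem directly from the results already in hand: the general lower bound of Theorem~\ref{cota-inferior}, its explicit evaluation for $r = 2$, and the two matching constructions of Lemmas~\ref{12,impar} and~\ref{12,par}. Both the lower bound and the orders of the constructions have effectively been computed in the surrounding discussion, so the task is really to record that these quantities coincide.

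First I would pin down the lower bound. By Theorem~\ref{cota-inferior}, every $[1,2;g]$-mixed cage has at least $n_0[1,2;g]$ vertices. To evaluate $n_0[1,2;g]$ I would invoke the classical fact that the $(2,h)$-cage is the cycle $C_h$, so $n_0(2,h) = h$ for each odd $h$ occurring in the relevant sum. Substituting into the two cases of Theorem~\ref{cota-inferior} and summing the arithmetic progression $\sum_{i=1}^{m}(2i+1) = m^2 + 2m$, with $m = (g-3)/2$ in the odd case and $m = (g-2)/2$ in the even case, collapses the bound to $n_0[1,2;g] = \frac{g^2+1}{2}$ for odd $g$ and $n_0[1,2;g] = \frac{g^2}{2}$ for even $g$. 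This short computation is the only genuine calculation in the proof.

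Next I would supply the matching constructions. For odd $g$, Lemma~\ref{12,impar} shows that the circulant mixed digraph $C_{\frac{g^2+1}{2}}(\{g\};\{1\})$ is a $[1,2;g]$-mixed graph of order exactly $\frac{g^2+1}{2}$; for even $g$, Lemma~\ref{12,par} shows that $H_g$ is a $[1,2;g]$-mixed graph of order exactly $\frac{g^2}{2}$. In each parity the order of the exhibited graph equals the lower bound $n_0[1,2;g]$ just computed.

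Combining the two directions completes the argument: any $[1,2;g]$-mixed cage has order at least $n_0[1,2;g]$, while Lemmas~\ref{12,impar} and~\ref{12,par} realize a $[1,2;g]$-mixed graph on exactly $n_0[1,2;g]$ vertices, forcing the cage to have precisely that order. I do not anticipate any real obstacle, since the structural difficulty, namely proving that each construction has girth exactly $g$, was already dispatched in the two lemmas; what remains is only the verification that the Moore-type count and the constructed orders agree, that is, the arithmetic simplification indicated above.
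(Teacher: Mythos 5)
Your proposal is correct and takes essentially the same route as the paper: the paper likewise obtains the theorem by evaluating the lower bound of Theorem~\ref{cota-inferior} using the fact that $(2,h)$-cages are cycles (so $n_0(2,h)=h$), and matching it against the constructions of Lemmas~\ref{12,impar} and~\ref{12,par}. Note that your value $\frac{g^2}{2}$ for even $g$ is the intended one; the $\frac{g}{2}$ appearing in the theorem statement is a typo in the paper.
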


\section{Results of $[z,1;4]$-mixed cages} \label{z14}

In this section we study the case of mixed graphs with any directed regular degree and undirected degree equal to $1$ (that is we have a ``directed graph" with an ``undirected matching"), and girth equal to four.   As we explained in the introduction, the study of directed cages is complicated, even for girth equal to four, this is the reason we start this introductory work with this seemingly simple case. 

First we give a family of $[z, 1; 4]$-mixed graphs that give us a lower bound of $n[z, 1; 4]$, also we will prove that they are mixed cages for $z=1,2$. 

\begin{lemma} \label{cotasup(z,1,4)}
Let $n$ be a positive integer.
\begin{enumerate}
	\item If $n$ is odd, then the mixed graph obtained from the circulant digraph  $$C_{3(n+1)} \left(1, \dots,
		\frac{n+1}{2}, \frac{3(n+1)}{2}+1, \dots, 2n+1 \right),$$ by adding edges between antipodal
		 vertices, has girth $4$ and is $n$-arc-regular.
	\item If $n$ is even, then the mixed graph obtained from the circulant digraph  $$C_{3n+2} \left(1, \dots,
	 	\frac{n}{2}, \frac{3n+2}{2}+1, \dots, 2n+1 \right),$$ by adding edges between antipodal
		vertices, has girth $4$ and is $n$-arc-regular.
\end{enumerate}
\end{lemma}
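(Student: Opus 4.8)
The plan is to handle both items at once. Write $N$ for the order ($N=3(n+1)$ when $n$ is odd, $N=3n+2$ when $n$ is even); in each case $N$ is even, so $M:=N/2$ is an integer, the antipodal map $i\mapsto i+M$ is an involution, and the added edges form a perfect matching, giving undirected degree $1$. I would first record the shape of the connection set $S$ uniformly as a disjoint union of a \emph{low block} $L=\{1,\dots,\lceil n/2\rceil\}$ and a \emph{high block} $M+B$ with $B=\{1,\dots,\lfloor n/2\rfloor\}$; a one-line check confirms that the top of the high block is $M+\lfloor n/2\rfloor=2n+1$, as in the statement. Since $C_N(S)$ is a circulant digraph, every vertex has out-degree and in-degree $|S|=\lceil n/2\rceil+\lfloor n/2\rfloor=n$, which settles $n$-arc-regularity.

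For the girth I would pass to residues modulo $N$. By vertex-transitivity a shortest cycle may be assumed to start and end at $0$; telescoping its steps shows that along any closed mixed walk the steps sum to $0\pmod N$, where each arc contributes an element of $S$ and each edge contributes $M$ (as $-M\equiv M$). Because the matching has maximum degree $1$, two edges are never consecutive on a cycle, so a triangle uses at most one edge and a $4$-cycle at most two, necessarily opposite, edges. Ruling out cycles of length $\le 3$ then reduces to four facts: (i) $M\notin S$, which kills the arc-plus-edge $2$-cycle (and, with $0\notin S$, shows there are no loops); (ii) no two elements of $S$ sum to $0\pmod N$, killing antiparallel arcs; (iii) $M\notin S+S\pmod N$, killing the triangle with one edge and two arcs; and (iv) no three elements of $S$ sum to $0\pmod N$, killing the all-arc triangle. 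Facts (i)--(iii) follow from a single observation: since $2M\equiv 0$, modulo $N$ the sumset $S+S$ is contained in $(L+L)\cup\bigl(M+(L+B)\bigr)\cup(B+B)\subseteq\{2,\dots,n+1\}\cup\bigl(M+\{2,\dots,n\}\bigr)$, a set containing neither $0$ nor $M$; and $M\notin S$ because $M$ exceeds $\max L=\lceil n/2\rceil$ while $M<M+1=\min(M+B)$.

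The crux is (iv). Here I would note that a sum of three elements of $S$ lies in $[3,6n+3]$, and since $2N\ge 6n+4$ while $N\le 6n+3$, the only multiple of $N$ in this range is $N$ itself; it therefore suffices to show that no three elements of $S$ sum to exactly $N$. Splitting by the number $k\in\{0,1,2,3\}$ of summands taken from the high block and estimating each range separately shows that for $k=0$ and $k=1$ the sum stays strictly below $N$ (the $k=1$ maximum being $N-1$), while for $k=2$ and $k=3$ the sum lies strictly between $N$ and $2N$ (the $k=2$ minimum being $N+3$). These inequalities are strict but tight at the ends, which is exactly where care is needed; in addition the degenerate small cases (notably $n=1$, where the high block is empty, so only $k=0$ occurs) must be checked by hand.

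Finally I would exhibit a $4$-cycle, giving girth $\le 4$ and hence, together with the above, girth exactly $4$. For $n$ odd take the three arcs of step $\lceil n/2\rceil=\tfrac{n+1}{2}$ followed by the antipodal edge, namely $0\to\tfrac{n+1}{2}\to n+1\to\tfrac{3(n+1)}{2}=M$ and then the edge $M$--$0$; for $n$ even take the four arcs $0\to\tfrac n2\to n\to\tfrac{3n}{2}\to 0$, whose closing step has length $M+1\in S$. In both cases the listed vertices are strictly increasing and below $N$, hence distinct, so the cycle is genuine. The main obstacle throughout is the bookkeeping in (iv); everything else is short interval arithmetic.
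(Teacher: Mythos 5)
Your proof is correct, and its underlying idea is the same as the paper's: exploit the circulant structure so that ruling out short cycles becomes arithmetic in $\mathbb{Z}_N$, i.e., showing that small sums of connection-set elements, possibly shifted by the antipodal step $M$, never vanish modulo $N$. The differences are in organization and completeness. The paper argues separately for each parity, computing the explicit interval sets $N^+(N^+(0))$, $N^-(0)$ and $N^+\left(\frac{N}{2}\right)$ and checking that two pairwise intersections are empty, which rules out directed triangles and triangles with two arcs and one edge through the vertex $0$; your uniform parametrization $S=L\cup(M+B)$ treats both parities at once, and your split of condition (iv) by the number of high-block summands replaces the paper's interval intersections by a cleaner counting argument. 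More importantly, your write-up fills genuine gaps in the paper's proof: the paper asserts that it ``suffices to show that any of its vertices does not belong to a mixed triangle,'' which tacitly assumes both that there are no cycles of length $1$ or $2$ (your conditions (i) and (ii), covering loops, digons, and arc-plus-edge $2$-cycles) and that some $4$-cycle exists; neither is verified there, so strictly speaking the paper only establishes girth at least $4$. Your explicit $4$-cycles (three arcs of step $\frac{n+1}{2}$ closed by an antipodal edge for odd $n$, and the all-arc cycle $0\to\frac{n}{2}\to n\to\frac{3n}{2}\to 0$ for even $n$) and your treatment of the degenerate case $n=1$ close exactly those gaps.
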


\begin{proof}
Let $D_n$ be the digraph described in the theorem.   In either case, $D_n$ is a circulant mixed graph, hence it is vertex-transitive.   So, it suffices to show that any of its vertices does not belong to a mixed triangle.   Suppose that $V(D_n) = \{ 0, \dots, 3(n+1) \}$ when $n$ is odd and $V(D_n) = \{ 0 \dots, 3n+2 \}$ for $n$ even.   We observe that the vertex $0$ does not belong to any mixed triangle.

Assume first that $n$ is even.   It is not hard to calculate the following sets.
$$N^+(N^+(0)) = [2,n] \cup \left[ \frac{3n+2}{2}+2, \frac{5n+2}{2} \right],$$
$$N^-(0) = \left[ n+1, \frac{3n+2}{2}-1 \right] \cup \left[ \frac{5n+4}{2}, 3n+1 \right],$$
$$N^+ \left( \frac{3n+2}{2} \right) = \left[ 1, \frac{n}{2} \right] \cup \left[ \frac{3n+2}{2}+1, 2n+1 \right].$$
Since the intersection between the first and second sets is empty, there are no directed triangles containing $0$.   There are not mixed triangles using two arcs and one edge because the intersection of the second and third sets is empty.

 For $n$ odd  we have the following equalities.
$$N^+(N^+(0)) = [2,n+1] \cup \left[ \frac{3(n+1)}{2}+2, \frac{5n+3}{2} \right],$$
$$N^-(0) = \left[ n+2, \frac{3(n+1)}{2}-1 \right] \cup \left[ \frac{5n+5}{2}, 3n+2 \right],$$
$$N^+ \left( \frac{3(n+1)}{2} \right) = \left[ 1, \frac{n-1}{2} \right] \cup \left[ \frac{3(n+1)}{2}+1, 2n+2 \right].$$ 
Analogous arguments as those on the first case complete the proof.
\end{proof}

The following conjecture was proved for the case $r=2$ (see \cite{B73}):
\begin{conjecture} \label{CH-2}(\cite{CH78})
Every digraph on $n$ vertices with minimum out-degree at least $r$ has a cycle of length
$\lceil \frac{n}{r} \rceil$.
\end{conjecture}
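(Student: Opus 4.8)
The plan is to recognize the statement as the Caccetta--H\"aggkvist conjecture, whose standard form asserts that a digraph on $n$ vertices with minimum out-degree at least $r$ contains a directed cycle of length \emph{at most} $\lceil n/r \rceil$ (matching the girth bound $\lfloor |V(D)|/k\rfloor$ quoted from \cite{CH78} earlier). In full generality this is a notorious open problem, so I would not attempt a complete proof; instead I would argue by contradiction in the natural way and then retreat to the special cases the paper actually needs. The setup is to assume that every directed cycle has length strictly greater than $\lceil n/r \rceil$ and to try to force a vertex count exceeding $n$.

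The core mechanism I would exploit is out-neighborhood expansion. Fixing a vertex $v$ and writing $N^+_k(v)$ for the set of vertices reachable from $v$ by a directed path of length exactly $k$, a large girth forces the early layers $N^+_1(v), N^+_2(v), \dots$ to be sizeable, and the hope is that they grow by a factor tied to $r$ until they exhaust $V(D)$. The hard part --- and exactly what separates the digraph case from the clean undirected Moore-bound count recalled in (1) --- is that out-neighborhoods of distinct vertices may overlap freely \emph{without} creating a short directed cycle, so the naive geometric bound $1 + r + r^2 + \cdots$ collapses and no elementary layering is known to survive in general.

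For the small cases the argument can be sharpened, and these are the ones I would actually carry out. For $r = 1$ the claim is immediate, since iterating a single out-arc must revisit a vertex within $n$ steps. For $r = 2$ --- the instance attributed to Behzad \cite{B73} that the paper will invoke --- I would combine a minimum-counterexample reduction with a careful disjointness analysis of the second out-neighborhood $N^+(N^+(v))$ against $N^-(v)$; this is precisely the style of neighborhood-interval computation already performed for the circulant families in Lemma~\ref{cotasup(z,1,4)}, where emptiness of the relevant intersections rules out short mixed cycles. The cases $r = 3, 4$ and the vertex-transitive case \cite{B75,H87,H80,H82} are progressively more intricate refinements of the same counting.

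The main obstacle is the general statement itself, and in particular its hardest instance, the triangle case $r = n/3$, which asks whether minimum out-degree $n/3$ forces a directed triangle; no expansion argument is known to close this gap, and the best partial results replace $n/3$ by a constant strictly larger than $n/3$ using extremal and flag-algebra machinery rather than the elementary layering above. Consequently my plan is to present the conjecture as stated, use the established $r = 2$ case as a black box for the application at hand, and flag the general case as open.
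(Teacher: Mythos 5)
Your proposal matches the paper's treatment exactly: the statement is the Caccetta--H\"aggkvist conjecture, which the paper does not prove but merely records as a conjecture citing \cite{CH78}, noting only that the $r=2$ case was proved by Behzad \cite{B73} and then invoking that case as a black box to get the equality for $z=2$ in its theorem on $[z,1;4]$-mixed cages. Your plan --- present the conjecture, flag the general case (including the directed-triangle instance) as open, and use the established $r=2$ case for the application --- is precisely what the paper does, and your observation that the statement should read ``a cycle of length \emph{at most} $\lceil n/r \rceil$'' is a correct reading of the intended form.
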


So, the following result is now easy to derive. 

\begin{theorem} \label{z14}
Let $z \ge 1$ be an integer, then we have that: 

$$n [z, 1; 4] \le \left\{ \begin{array}{cr}
3(z+1) & \textnormal{ if } z \textnormal{ is odd.} \\ 3z + 2 & \textnormal{ otherwise.} \end{array}
\right.$$   Moreover, the equality holds for $z \in \{ 1, 2 \}$.
\end{theorem}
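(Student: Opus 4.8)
The plan is to read the upper bound off from Lemma~\ref{cotasup(z,1,4)} and then match it with lower bounds in the two cases where equality is claimed. For the upper bound I would simply instantiate Lemma~\ref{cotasup(z,1,4)} with $n=z$: it yields a $z$-arc-regular circulant digraph of girth $4$ on $3(z+1)$ vertices when $z$ is odd and on $3z+2$ vertices when $z$ is even, and since the circulant order is even in both cases, the adjoined edges between antipodal vertices form a perfect matching, giving undirected degree exactly $1$. These are $[z,1;4]$-mixed graphs of the stated orders, so $n[z,1;4]\le 3(z+1)$ when $z$ is odd and $n[z,1;4]\le 3z+2$ when $z$ is even. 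For the equality at $z=1$ the matching lower bound is immediate from Theorem~\ref{cota-inferior} (equivalently Theorem~\ref{girth4}): $n[1,1;4]\ge n_0[1,1;4]=2(1+2)=6=3(z+1)$, which the construction attains.

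The substance lies in the lower bound for $z=2$, which I would establish in two steps. Let $G$ be an arbitrary $[2,1;4]$-mixed graph on $n$ vertices and let $D$ be the digraph formed by its arcs; every vertex of $D$ has out-degree $2$, and $D$ has no directed cycle shorter than $4$ because $G$ has girth $4$. First, Conjecture~\ref{CH-2}, which is known for $r=2$, guarantees a directed cycle of length at most $\lceil n/2\rceil$ in $D$; were $n\le 6$, this cycle would have length at most $3$, contradicting the girth, so $n\ge 7$. Second, since $G$ is $1$-edge-regular its edges form a perfect matching, so $n$ is even, and together with $n\ge 7$ this forces $n\ge 8=3z+2$. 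As the construction of Lemma~\ref{cotasup(z,1,4)} already realizes order $8$, we conclude $n[2,1;4]=8$.

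The main obstacle is exactly this last parity step for $z=2$: the directed-girth estimate---whether taken from Conjecture~\ref{CH-2} or from the Behzad--Chartrand--Wall value $(5z+4)/2=7$ for $(2,4)$-directed cages---yields only $n\ge 7$, and it is the evenness of $n$ coming from the perfect matching of edges that upgrades the bound to $n\ge 8$. This is also what confines the equality claim to $z\in\{1,2\}$: for $z=1$ we lean on the earlier tree bound (where Conjecture~\ref{CH-2} with $r=1$ gives nothing useful), while for $z=2$ we lean on Conjecture~\ref{CH-2}, which is presently available only up to out-degree $2$, so the same argument does not yet settle larger even values of $z$.
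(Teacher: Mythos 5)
Your proposal is correct and follows essentially the same route as the paper: the upper bound is read off from Lemma~\ref{cotasup(z,1,4)}, equality at $z=1$ comes from the elementary lower bound $n_0[1,1;4]=6$, and equality at $z=2$ comes from the proved ($r=2$) case of Conjecture~\ref{CH-2}. In fact your write-up is more complete than the paper's one-line proof, since you make explicit the step the paper leaves implicit: the Caccetta--H\"aggkvist bound alone only gives $n\ge 7$, and it is the parity forced by the perfect matching of edges that upgrades this to $n\ge 8$.
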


\begin{proof}
The inequality follows directly from Lemma \ref{cotasup(z,1,4)}.  The equality for $z = 1$ is
trivial to verify.   The equality for $z = 2$ follows from the proved case of Conjecture \ref{CH-2}.
\end{proof}

\section{Conclusions and Future Work}

The objective of this paper is to introduce a new research direction: the mixed graph analogue to cages in undirected graphs.   As we mentioned in the introduction, this was inspired by the numerous interesting results that have been recently obtained in the study of "Mixed Moore Graphs" (the mixed graph analogue to Moore Graphs).

As many papers introducing a new subject, in this work we give the basics of this topic and study some simple cases.   We successfully introduce the $[z,r;g]$-mixed cages, proving their existence for any suitable value of $z, r$ and $g$; also, some upper and lower bounds on the order of a $[z,r;g]$-mixed cage are obtained for particular values of these parameters.   In some cases, we obtain the exact order of a $[z,r;g]$-mixed cage, and the precise family of $[z,r;g]$-mixed cages.

We believe that there is plenty of work to do in this new field.   Our first objective is continue with the case $z=1$, because $[1,r;g]$-mixed graphs are the mixed graphs ``closest'' in structure to graphs, and the results for this case could be inspired by known results on (undirected) cages.  In this direction, the obvious first step is to improve, for any $r \ge 3$, the bound given in this paper for the order of  $[1,r;5]$-mixed cages. Furthermore, we propose to study $[1,r;6]$-mixed cages. It is easy to note that, each one of these two cases is interesting in itself, but also the relation between them is interesting; it is important to recall that, for cages, there are exactly four values of $r$ for which a Moore cage can exist: $2,3,7$ and $57$, and exactly three Moore cages of girth five are known (for the values $2,3,7$) whereas, for $g=6$, there exists an infinite number of them (the incidence graph of a projective plane of order $r-1$ is a $(r,6)$-Moore cage, whenever $r-1$ is a prime power). So, inspired by the previous results on cages some natural questions arise: Is it true that there are finitely many values of $r$ such that there exist $[1,r;5]$-mixed cages that attain the lower bound given in this paper?   If none exists: It is possible improve this bound?   Furthermore, are there finitely many values of $r$ such that there exist $[1,r;6]$-mixed cages that attain the lower bound given in this paper?

Finally, as we mentioned in the introduction, as the value of $z$ grows, the study of $[z,r;g]$-mixed cages becomes increasingly similar to the study of directed cages, which has proved to be a very difficult subject.   As a first problem, we propose to find a lower bound for the order of a $[z,1;4]$-mixed cage.   This, together with the upper bound given in Theorem \ref{z14}, could result in an exact value for the order of this family of mixed cages.   Of course, it would be interesting to obtain bounds for $g = 4$ and any value of $z$ and $r$.

\subsection*{Acknowledgment}
The authors wish to thank the anonymous referees of this paper.

{ Research   supported by CONACyT-M\'exico under projects 178395, 166306, and PAPIIT-M\'exico under project IN104915}. 

\end{document}